\numberwithin{equation}{section}
\setlist[enumerate,1]{label={\upshape(\roman*)}}
\theoremstyle{theorem}
\newtheorem{theorem}{Theorem}[section]
\newtheorem{proposition}[theorem]{Proposition}
\newtheorem{lemma}[theorem]{Lemma}
\theoremstyle{definition}
\newtheorem{remark}[theorem]{Remark}
\theoremstyle{remark}
\theoremstyle{proof}
\newcommand{\R}{\mathbb{R}}
\newcommand{\E}{\mathbb{E}}
\renewcommand{\P}{\mathbb{P}}
\newcommand{\cG}{\mathcal{G}}
\newcommand{\cH}{\mathcal{H}}
\newcommand{\cL}{\mathcal{L}}
\newcommand{\sfA}{\mathsf{A}}
\newcommand{\sfB}{\mathsf{B}}
\newcommand{\sfC}{\mathsf{C}}
\newcommand{\wt}{\widetilde}
\newcommand{\varep}{\varepsilon}
\newcommand{\dive}{\operatorname{div}}
\begin{document}

\title{Quantitative inequalities for the expected lifetime of Brownian motion} 
\author{Daesung Kim}
\address{Department of Mathematics, Purdue University \\ 150 N. University Street, West Lafayette, IN 47907-2067, USA}
\email{daesungkim@purdue.edu}
\begin{abstract}
The isoperimetric inequalities for the expected lifetime of Brownian motion state that the $L^p$-norms of the expected lifetime in a bounded domain for $1\leq p\leq \infty$ are maximized when the region is a ball with the same volume. In this paper, we prove quantitative improvements of the inequalities.  Since the isoperimetric properties hold for a wide class of L\'evy processes, many questions arise from these improvements. 
\end{abstract}
\date{\today}

\maketitle

\section{Introduction}
Finding stability estimates has been of current interest in the study of functional and geometric inequalities such as the Sobolev inequalities \cite{Bianchi1991a, Cianchi2009a, Chen2013a}, the Hardy--Littlewood--Sobolev inequality \cite{Carlen2017a}, the logarithmic Sobolev inequality \cite{Indrei2014a,Fathi2016a, Indrei2018a,Feo2017a, Dolbeault2016a, Kim2018a}, the Hausdorff--Young inequality \cite{Christ2014a}, the isoperimetric inequalities \cite{Fusco2008a, Figalli2010a, Fusco2011a}, and the Faber--Krahn inequalities \cite{Brasco2015a, Brasco2019a}. Generally speaking, a functional or geometric inequality can be written as 
\begin{equation*}
    \cG(u)\geq c\cH(u)
\end{equation*}
where $\cG$ and $\cH$ are nonnegative functionals on a class of admissible functions or sets. The inequality is called \emph{sharp} if the constant $c$ cannot be replaced by any larger number. It is called \emph{optimal} if there exists $u_0$ such that $\cG(u_0)=c\cH(u_0)$. Such $u_0$ is called an \emph{optimizer}. For an optimal inequality, the deficit is defined by $\delta(u)=\cG(u)-c\cH(u)\geq 0$. Once the class of optimizers are characterized, a natural question is to measure the deviation of $u$ from optimizers when $\delta(u)$ gets close to 0. In particular, a lower bound of $\delta(u)$ in terms of a distance of $u$ from the class of optimizers is called a \emph{stability estimate} or a \emph{quantitative improvement}.

Let $\alpha\in(0,2]$ and $D$ a bounded domain in $\R^{n}$. Let $X^{\alpha}_t$ be the rotationally symmetric $\alpha$--stable process with generator $-(-\Delta)^{\alpha/2}$. Let $\tau^{\alpha}_{D}$ be the first exit time of $X^{\alpha}_{t}$ from $D$ and $u^{\alpha}_{D}(x)=\E^{x}[\tau^{\alpha}_{D}]$ the expected lifetime where $\E^{x}$ is the expectation associated with $X^\alpha_t$ starting at $x\in\R^n$. Note that $u^{\alpha}_{D}(x)$ is a solution to the equation
\begin{equation*}
    \begin{cases}
        (-\Delta)^{\frac{\alpha}{2}}u(x)=1, & x\in D, \\
        u(x)=0, & x\notin D.
    \end{cases}
\end{equation*}
If $B$ is a ball of radius $R$ and centered at the origin, then $u^{\alpha}_{B}(x)$ is explicitly given by
\begin{equation*}
    u^{\alpha}_{B}(x)=C_{n,\alpha}(R^{2}-|x|^{2})^{\frac{\alpha}{2}}.
\end{equation*}
For $\alpha=2$, $X_t^\alpha$ is Brownian motion with generator $\Delta$. In this case, we drop the superscript $\alpha$. 

Ba\~nuelos and M\'endez-Hern\'andez \cite{Banuelos2010a} showed that several isoperimetric type inequalities for Brownian motion continue to hold for a wide class of L\'evy processes using the symmetrization of L\'evy processes and the multiple integral rearrangement inequalities of Brascamp--Lieb--Luttinger \cite{Brascamp1974a}. Indeed, they proved in \cite[Theorem 1.4]{Banuelos2010a} that if a L\'evy process $Y_t$ has an absolutely continuous L\'evy measure with respect to the Lebesgue measure, and if $f$ and $V$ are nonnegative continuous functions, then for any $x\in D$ and $t>0$,
\begin{equation*}
    \E^0[f^\ast(Y^\ast_t)\exp\left(\int_0^t V^\ast(Y^\ast_s)\, ds\right);\tau_B^{Y^\ast_t}>t]
    \geq \E^x[f(Y_t)\exp\left(\int_0^t V(Y_s)\, ds\right);\tau_D^{Y_t}>t]
\end{equation*}
where $f^\ast$ and $V^\ast$ are the symmetric decreasing rearrangements of $f$ and $V$, $Y_t^\ast$ is the symmetrization of $Y_t$ defined in \cite[p.4029]{Banuelos2010a}, and $B$ is a ball centered at 0 with $|D|=|B|$. A particular case  of this is that for all $t\geq 0$ and $x\in\R^n$, 
\begin{equation}\label{eq:stableDist}
    \P^{0}(\tau_{B}^{\alpha}>t)\geq \P^{x}(\tau_{D}^{\alpha}>t),
\end{equation}
which yields 
\begin{equation}\label{eq:stableELI}
    u^{\alpha}_{B}(0)\geq u^{\alpha}_{D}(x),
\end{equation}
where $B$ is a ball centered at 0 with $|B|=|D|$. In fact,  \eqref{eq:stableDist}  gives 
\begin{equation}\label{eq:stable-pmmt}
    \E^0(\tau_{B}^{\alpha})^p\geq \E^x(\tau_{D}^{\alpha})^p
\end{equation}
for all $p>0$. Talenti \cite{Talenti1976b} proved that the $L^p$ norm of a solution of a second-order elliptic equation is maximized when the elliptic operator and the domain are symmetrically rearranged. In particular, the result yields that for $p>0$, $\alpha=2$, and a bounded domain $D$,
\begin{equation}\label{eq:Talenti}
    \|u_{B}\|_p\geq \|u_D\|_p
\end{equation}
where $B$ is a ball with $|B|=|D|$. 

Given the above isoperimetric type inequalities for the first exit time of the $\alpha$-stable process and their connection to the classical torsion function, there are many questions that arise concerning quantitative versions of these inequalities. The goal of this paper is to study quantitative versions of the expected lifetime inequalities \eqref{eq:stableELI} for $\alpha=2$ and \eqref{eq:Talenti} for $p\geq 1$. We define the deficit of \eqref{eq:stableELI} by
\begin{equation}\label{eq:eetineq}
    \delta(x,D)=1- \frac{u_{D}(x)}{u_{B}(0)}\geq 0
\end{equation}
where $B$ is a ball centered at 0 with $|B|=|D|$. The first main result is a lower bound of the deficit $\delta(x,D)$ in terms of the deviations of $x$ and $D$ from the class of optimizers. Note that equality holds in \eqref{eq:eetineq} if $D$ is a ball and $u_{D}(x)=\max_{y\in D}u_D(y)$. The deviation of $x$  is represented by the level set $|\{y\in D:u_D(y)>u_D(x)\}|$, and the deviation of $D$ by the Fraenkel asymmetry, which is defined by
\begin{equation*}
    A(D)=\inf\left\{\frac{|D\triangle B|}{|D|}: B\text{ is a ball with }|B|=|D|\right\}. 
\end{equation*}

\begin{theorem}\label{mainthm1}
Let $D\subseteq \R^{n}$ be a bounded domain with $A(D)>0$. Let $D_{t}=\{y\in D: u_{D}(y)>t\}$, $\mu(t)=|D_{t}|$, and
\begin{equation}\label{eq:tstar}
    t_{\ast}=t_{\ast}(D)=\sup\left\{t>0:\mu(t)>|D|(1-\frac{1}{4}A(D))\right\}.
\end{equation}
Then we have
\begin{equation}\label{eq:main1}
    \delta(x,D)        
    \geq |D|^{-\frac{2}{n}}\Big(\mu(u_{D}(x))^{\frac{2}{n}}+\sfC_{n}(u_{D}(x)\wedge t_{\ast})A(D)^{2}\Big),
\end{equation}
where $\sfC_n=\beta_{n}\omega_{n}^{\frac{1}{n}}$, $\beta_n$ is a dimensional constant in \eqref{eq:quantiso}, and $\omega_n$ is the volume of a unit ball in $\R^n$.
\end{theorem}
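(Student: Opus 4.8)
The plan is to run Talenti's symmetrization argument in a quantitative form, feeding in the quantitative isoperimetric inequality \eqref{eq:quantiso} in place of the classical one. Write $u=u_D$, $s=u_D(x)$, $M=\sup_D u$, and let $P(\cdot)$ denote perimeter. Since $D$ is a domain and $-\Delta u=1>0$ in $D$, the strong maximum principle gives $u>0$ in $D$, so $\mu(0)=|D|$; since $u$ solves an elliptic equation with analytic data it is nowhere locally constant, so the level sets $\{u=t\}$ are Lebesgue-null and $\mu$ is continuous and decreasing on $[0,M]$. For a.e.\ $t\in(0,M)$ the set $D_t=\{u>t\}$ has finite perimeter with $P(D_t)=\mathcal{H}^{n-1}(\{u=t\})$ (Sard); integrating $-\Delta u=1$ over $D_t$ gives $\mu(t)=\int_{\{u=t\}}|\nabla u|\,d\mathcal{H}^{n-1}$, the coarea formula gives $-\mu'(t)=\int_{\{u=t\}}|\nabla u|^{-1}\,d\mathcal{H}^{n-1}$, and Cauchy--Schwarz yields $P(D_t)^2\le\mu(t)(-\mu'(t))$. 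Inserting \eqref{eq:quantiso} for $D_t$ and passing to $v(t):=\mu(t)^{2/n}$ would convert this into a differential inequality of the shape
\begin{equation*}
    -v'(t) \ge 2n\omega_n^{2/n}+\kappa_n\beta_n A(D_t)^2 \qquad\text{for a.e. }t\in(0,M),
\end{equation*}
where $\kappa_n>0$ is a dimensional constant: the leading term is the ball's equality case, and the second term comes from the quadratic surplus in \eqref{eq:quantiso} together with $(1+y)^2\ge 1+2y$. (For a merely bounded domain $u\in W^{1,2}_0(D)\cap C(D)$ carries enough regularity for this chain; otherwise one first treats smooth $D$ and approximates.)

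The crux is to replace $A(D_t)$ by $A(D)$ on the range $t\in(0,t_\ast)$. By the definition \eqref{eq:tstar} of $t_\ast$, for such $t$ one has $|D\setminus D_t|=|D|-\mu(t)\le\tfrac14A(D)|D|$. I would then take any ball $B'$ with $|B'|=\mu(t)=|D_t|$ and the concentric ball $\wt{B'}\supseteq B'$ with $|\wt{B'}|=|D|$; since $\wt{B'}$ is a ball of volume $|D|$, $|D\setminus\wt{B'}|=\tfrac12|D\triangle\wt{B'}|\ge\tfrac12A(D)|D|$, and hence
\begin{equation*}
    |D_t\setminus B'| \ge |D\setminus B'|-|D\setminus D_t| \ge |D\setminus\wt{B'}|-\tfrac14A(D)|D| \ge \tfrac14A(D)|D|.
\end{equation*}
As $|D_t|=|B'|$ we get $|D_t\triangle B'|=2|D_t\setminus B'|\ge\tfrac12A(D)|D|$, and taking the infimum over $B'$ and using $|D_t|\le|D|$ gives $A(D_t)\ge\tfrac12A(D)$, hence $A(D_t)^2\ge\tfrac14A(D)^2$, for every $t<t_\ast$.

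To conclude, integrate the differential inequality from $0$ to $s$; since $v$ is monotone decreasing, $v(0)-v(s)\ge\int_0^s(-v'(t))\,dt$, so using $v(0)=|D|^{2/n}$ and the asymmetry bound one obtains
\begin{equation*}
    |D|^{2/n}-\mu(s)^{2/n} \ge 2n\omega_n^{2/n}s+\tfrac14\kappa_n\beta_n(s\wedge t_\ast)A(D)^2 .
\end{equation*}
The ball $B$ with $|B|=|D|$ has torsion function $u_B(y)=\tfrac1{2n}(R^2-|y|^2)$ with $R=(|D|/\omega_n)^{1/n}$ (the constant $\tfrac1{2n}$ being forced by $-\Delta u_B=1$), so $u_B(0)=\tfrac1{2n}(|D|/\omega_n)^{2/n}$, i.e.\ $|D|^{2/n}=2n\omega_n^{2/n}u_B(0)$; substituting this and dividing the last display by $2n\omega_n^{2/n}u_B(0)$ would turn it into \eqref{eq:main1}, with $\sfC_n$ the resulting combination of $\kappa_n$, $\beta_n$ and $\omega_n$, which under the normalization of \eqref{eq:quantiso} is $\beta_n\omega_n^{1/n}$; note that $\mu(u_D(x))=|\{y\in D:u_D(y)>u_D(x)\}|$ is precisely the ``deviation of $x$'' from the maximizers. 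The main obstacle is the middle step: Fraenkel asymmetry is not monotone under set inclusion, so bounding $A(D_t)$ below by a fixed fraction of $A(D)$ genuinely needs $D_t$ to be $L^1$-close to $D$ — which is exactly what the calibration of $t_\ast$ in \eqref{eq:tstar} buys — and the estimate must be made uniformly over all competitor balls $B'$ since $A(D_t)$ is defined as an infimum.
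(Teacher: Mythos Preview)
Your proof is correct and follows essentially the same route as the paper: Talenti's symmetrization differential inequality upgraded with the quantitative isoperimetric inequality \eqref{eq:quantiso}, transfer of asymmetry to the level sets $D_t$ for $t<t_\ast$ via the Hansen--Nadirashvili argument (which you reprove inline rather than citing as a lemma), and integration in $t$. The only presentational differences are that the paper derives $\mu(t)(-\mu'(t))\geq P(D_t)^2$ through the weak formulation with test function $(u-t)_+$ (giving $\mu(t)=-\tfrac{d}{dt}\int_{D_t}|\nabla u|^2$) instead of the divergence theorem on $D_t$, and it phrases the final integration in terms of $\E^0[\tau_{R(t)}]$ rather than $v(t)=\mu(t)^{2/n}$; the constants and the end result are identical.
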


The proof is based on the proof of \eqref{eq:stableELI} for $\alpha=2$ in \cite{Banuelos1987a, Talenti1976b}, and the sharp quantitative isoperimetric inequality \cite{Fusco2008a}. In order to estimate the asymmetry of the level sets, we use the idea of Hansen and Nadirashvili \cite{Hansen1994a} as in the proof of the boosted P\'olya--Szeg\"o inequality \cite[Lemma 2.9]{Brasco2016a}.  

\begin{remark}\label{rmk1}
We note that \eqref{eq:main1} with the first remainder term follows from the pointwise estimate $u_B(x)\geq (u_D)^\ast(x)$ of \cite{Talenti1976a} where $(u_D)^\ast(x)$ is the symmetric decreasing rearrangement of $u_D(x)$ and $B$ is a ball centered at 0 with $|D|=|B|$. For simplicity, we assume $|D|=1$.    For each $x\in D$, we define $r:D\to [0,\infty)$ by $\mu(u_D(x))=|B_{r(x)}|$ where $B_{r(x)}$ is a ball of radius $r(x)$. For a nonnegative measurable function $f$ on $D$, the symmetric decreasing rearrangement $f^\ast(x)=f^\ast(|x|)$ satisfies $f^\ast(r(x))\geq f(x)$ for each $x\in D$. Since $u_B$ is rotationally symmetric, we use the notation $u_B(x)=u_B(|x|)$. By $u_B(x)\geq (u_D)^\ast(x)$, one has 
\begin{align*}
    u_D(x)
    \leq (u_D)^\ast(r(x))
    \leq u_B(r(x))
    =u_B(0)(1-(\omega_n^{\frac{1}{n}}r(x))^2)
    =u_B(0)(1-\mu(u_D(x))^{\frac{2}{n}}).
\end{align*} 
Notice that \eqref{eq:main1} can be written as $u_B(r(x))-u_D(x)\geq \sfC_n(u_D(x)\wedge t_\ast)A(D)^2$.
\end{remark}

\begin{remark}
Note that if $A(D)>0$, then $t_\ast>0$. Suppose $\delta(x,D)=0$. If $A(D)>0$, then \eqref{eq:main1} implies $\mu(u_D(x))=0$ and $u_D(x)=0$, which is a contradiction. Thus $D$ is a ball with $|B|=|D|$. As a consequence, one sees that equality holds \eqref{eq:eetineq} \emph{only if} $D$ is a ball and $u_{D}(x)=\max_{y\in D}u_D(y)$.
\end{remark}

\begin{remark}
One can extend the result to a wide class of second-order elliptic operators as in \cite{Talenti1976b}. Let $\cL=\partial_i(a_{ij}(x)\partial_j)$ where $a_{ij}(x)$ is a bounded measurable function with
\begin{align}\label{eq:elliptic}
    \sum_{i,j=1}^{n}a_{ij}(x)\xi_i\xi_j\geq \sum_{i=1}^n \xi_i^2
\end{align}
for each $x\in\R^n$ and $\xi=(\xi_1,\cdots,\xi_n)\in\R^n$. Consider a weak solution $u^{\cL}_D$ of
\begin{align*}
    \begin{cases}
        -\cL u(x)=1, & x\in D,\\
        u(x)=0, & x\in\partial D.
    \end{cases}
\end{align*}
Following the proof of Theorem \ref{mainthm1} and modifying \eqref{eq:deri_energy} with inequality, which follows from the elliptic condition \eqref{eq:elliptic}, one obtains
\begin{align*}
    1-\frac{u^\cL_D(x)}{u_B(0)}
    \geq |D|^{-\frac{2}{n}}\Big(\mu(u^\cL_{D}(x))^{\frac{2}{n}}+C_{n}(u^\cL_{D}(x)\wedge t_{\ast})A(D)^{2}\Big).
\end{align*} 
\end{remark}

The second result is a quantitative inequality for the $L^p$ norm of the expected lifetime, $1\leq p\leq\infty$. We define the $L^p$ deficit of the expected lifetime inequality \eqref{eq:Talenti} for $1\leq p\leq\infty$ by
\begin{align*}
    \delta_p(D)=
    1-\left(\frac{\|u_D\|_p}{\|u_{B}\|_p}\right)^{\kappa(p)}
\end{align*}
where $\kappa(p)=p$ for $1\leq p<\infty$, $\kappa(\infty)=1$, and $B$ is a ball with $|B|=|D|$.

\begin{theorem}\label{mainthm2}
Let $n\geq 2$ and $D$ be a bounded domain in $\R^n$. For $1\leq p\leq \infty$, we have
\begin{equation}\label{eq:quantLpelt}
    \delta_p(D)
    \geq \sfC_{n,p}A(D)^{2+\kappa(p)}
\end{equation}
where $\sfC_{n,p}$ is explicitly given in \eqref{eq:Cnp_1} and \eqref{eq:Cnp_2}.
\end{theorem}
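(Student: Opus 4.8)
The plan is to deduce Theorem \ref{mainthm2} from Theorem \ref{mainthm1} by integrating the pointwise improvement over the domain. Normalize $|D|=1$ (the general case follows by scaling, tracking the appropriate powers of $|D|$). By Remark \ref{rmk1}, Theorem \ref{mainthm1} can be restated as the pointwise bound $u_B(r(x)) - u_D(x) \geq \sfC_n(u_D(x)\wedge t_\ast)A(D)^2$, where $r(x)$ is defined by $\mu(u_D(x)) = |B_{r(x)}|$. Equivalently, passing to the level-set (distribution function) picture: for $0 \leq t \leq \|u_D\|_\infty$, writing $\mu(t) = |\{u_D > t\}|$ and $\nu(s) = |\{u_B > s\}|$ for the corresponding distribution function of $u_B$, the inequality $u_B((\mu(t)/\omega_n)^{1/n}) \geq t + \sfC_n(t\wedge t_\ast)A(D)^2$ translates into a comparison between $\mu$ and $\nu$. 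Concretely, $\nu(s) = \mu(t)$ forces $s \geq t + \sfC_n(t\wedge t_\ast)A(D)^2$, i.e. $\mu(t) \leq \nu(t + \sfC_n(t\wedge t_\ast)A(D)^2)$. This is the engine; everything else is a layer-cake computation.

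For $1 \leq p < \infty$, write $\|u_D\|_p^p = p\int_0^\infty t^{p-1}\mu(t)\,dt$ and similarly for $u_B$. The plan is to split the integral at $t_\ast$. On $[0,t_\ast]$ use the shift $\mu(t) \leq \nu(t(1+\sfC_n A(D)^2))$; after the change of variables $s = t(1+\sfC_n A(D)^2)$ this contributes a factor $(1+\sfC_n A(D)^2)^{-p}$ against the corresponding piece of $\|u_B\|_p^p$. On $[t_\ast, \|u_D\|_\infty]$ use the weaker bound $\mu(t)\leq \nu(t + \sfC_n t_\ast A(D)^2)$, or simply the trivial $\mu(t)\leq\nu(t)$, whichever is cleaner. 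Summing and using $(1+x)^{-p} \leq 1 - c_p x$ for $x$ in a bounded range (or expanding to first order), one extracts $\|u_D\|_p^p \leq \|u_B\|_p^p(1 - \sfC_{n,p}A(D)^2)$ — but this only gives exponent $2$, not $2+p$. To gain the extra $A(D)^p$ one must exploit that $t_\ast$ itself is comparable to $\|u_D\|_\infty$ up to a power of $A(D)$: from the definition \eqref{eq:tstar}, $\mu(t_\ast) = 1 - \tfrac14 A(D)$, so the "bad" layer $[t_\ast,\|u_D\|_\infty]$ has measure-profile bounded by $\tfrac14 A(D)$, and one estimates its contribution to $\|u_D\|_p^p$ by $\|u_D\|_\infty^p \cdot \tfrac14 A(D)$, then bootstraps using a lower bound on $\|u_D\|_\infty$ (itself controlled below, since $u_D\geq$ torsion function of an inscribed ball, and an upper bound $\|u_D\|_\infty \leq u_B(0)$). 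Balancing the two regimes produces the exponent $2+\kappa(p)$. The case $p=\infty$ is the most direct: $\|u_D\|_\infty = u_D(x_0)$ at a maximum point, where $\mu(u_D(x_0)) = 0$, and Theorem \ref{mainthm1} at $x_0$ with $u_D(x_0)\wedge t_\ast = t_\ast$ gives $\delta_\infty(D) = 1 - \|u_D\|_\infty/u_B(0) \geq \sfC_n t_\ast A(D)^2$; one then needs $t_\ast \geq c\,\|u_B\|_\infty A(D)^{\kappa(\infty)} = c\,\|u_B\|_\infty A(D)$, which again follows by combining $\mu(t_\ast) = 1-\tfrac14 A(D)$ with regularity/decay estimates for $u_D$ near its maximum.

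The main obstacle I expect is precisely this last point: obtaining a clean lower bound $t_\ast \gtrsim \|u_D\|_\infty A(D)$ (or the analogous statement quantifying how much mass of $u_D$ sits above level $t_\ast$), since $t_\ast$ is defined only implicitly through the distribution function and could a priori be tiny. One needs an a priori modulus-of-continuity or gradient bound for $u_D$ — e.g. that $u_D$ cannot rise from $t_\ast$ to its maximum across a set of volume as small as $\tfrac14 A(D)$ without $\|u_D\|_\infty - t_\ast$ being correspondingly small, which should follow from a uniform Lipschitz estimate for torsion functions of domains of fixed volume, or from the explicit comparison $u_D \leq u_B$ combined with $u_B$'s profile. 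Assembling the constants $\sfC_{n,p}$ in \eqref{eq:Cnp_1}–\eqref{eq:Cnp_2} explicitly is then bookkeeping: tracking $\beta_n$, $\omega_n$, the balancing exponent, and the universal lower bound on $\|u_D\|_\infty$ through the two-regime split.
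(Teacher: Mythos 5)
Your proposal correctly identifies the overall architecture (deduce the $L^p$ inequality from the pointwise Theorem \ref{mainthm1} by passing to distribution functions and doing a layer-cake computation), and it also correctly isolates the crux: one must control $t_\ast$ from below, since the improvement from Theorem \ref{mainthm1} degenerates when $t_\ast$ is tiny. The paper's first step is essentially your plan: one shows $\|u_B\|_p^p-\|u_D\|_p^p\geq \tfrac12\tilde{\sfC}_n A(D)^2\, t_\ast^{\,p}$ by integrating the pointwise bound and using $a^p-b^p\geq p\,b^{p-1}(a-b)$.

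However, the fix you propose for the case of small $t_\ast$ — a modulus-of-continuity or uniform Lipschitz bound for $u_D$, combined with a lower bound on $\|u_D\|_\infty$ — would not close the gap, and in fact is not needed. A gradient bound on $u_D$ cannot yield $t_\ast\gtrsim A(D)$ uniformly over all bounded domains of fixed volume: if $D$ has very large perimeter, the boundary layer $\{u_D\leq t\}$ can already occupy volume $\tfrac14 A(D)$ at an arbitrarily small level $t$, so $t_\ast$ really can be tiny, and no regularity estimate on $u_D$ rules this out. The idea the paper uses instead is a \emph{dichotomy comparing $t_\ast$ to a fixed threshold $t_0$ defined via the ball}: choose $t_0$ so that $\mu_B(2t_0)=1-\tfrac18 A(D)$, and verify from the explicit profile of $u_B$ that $t_0\gtrsim A(D)$. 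If $t_\ast\geq t_0$, the bound $\propto A(D)^2 t_\ast^{\,p}$ already gives $A(D)^{2+p}$. If $t_\ast<t_0$, one does not try to lower-bound $t_\ast$ at all; instead one observes that $\mu_D(t)\leq 1-\tfrac14A(D)$ for $t>t_\ast$ while $\mu_B(t)\geq 1-\tfrac18A(D)$ for $t\leq 2t_0$, so $\mu_B-\mu_D\geq \tfrac18 A(D)$ on an interval of length $\geq t_0\gtrsim A(D)$, and the layer-cake representation of $\|u_B\|_p^p-\|u_D\|_p^p$ gives the claim directly. So a small $t_\ast$ is not an obstruction that needs regularity to overcome — it is itself the source of the gap between $\mu_D$ and $\mu_B$. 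Finally, for $p=\infty$ and $t_\ast<t_0$ the paper uses yet another device you did not mention: the strong Markov property, writing $\E^0[\tau_D]\leq\E^0[\tau_{D_1}]+t_1$ with $D_1$ a superlevel set just above $t_\ast$, and comparing against the decomposition $\E^0[\tau_B]=\E^0[\tau_{\tilde B}]+\tilde t$ with $\tilde t>2t_0$. Without this (or the $t_0$ dichotomy in the finite-$p$ case), your proposal as written does not produce the stated exponent $2+\kappa(p)$.
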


\begin{remark}
The torsional rigidity of $D$ is defined by $T(D)=\|u_D\|_1$. The Saint-Venant inequality states that the torsional rigidity is maximized when the region is a ball with the same volume. If $p=1$, Theorem \ref{mainthm2} produces the non-sharp quantitative Saint-Venant inequality
\begin{align}\label{eq:quantSV}
    T(B)-T(D)\geq \sfC_{n,1}T(B)A(D)^{3},
\end{align}
which was proven in \cite{Brasco2016a}. Thus Theorem \ref{mainthm2} can be thought of as an extension of \eqref{eq:quantSV} to the case $1<p\leq \infty$.
\end{remark}

\begin{remark}\label{rmk:ellipse}
It is natural to ask if the exponent of $A(D)$ in \eqref{eq:quantLpelt} is sharp. Let $n=2$ and $\varep>0$. Consider an ellipse $D=\{(x,y)\in\R^2:x=\cos t, y=(1+\varep)\sin t, t\in\R\}$. The asymmetry of $D$ is $A(D)=\frac{1}{\pi}\varep+O(\varep^2)$ (see \cite[pp. 88--89]{Hall1991a}).
Note that the torsion function of $D$ is 
\begin{align*}
    u_D(x)=\frac{(1+\varep)^2}{2(1+(1+\varep)^2)}\left(1-x^{2}-\frac{y^2}{(1+\varep)^2}\right).
\end{align*}
Let $B$ be a ball with $|B|=|D|=(1+\varep)\pi$. Let $p\in[1,\infty)$. A direct computation shows that
\begin{align*}
    \|u_B\|_p^p-\|u_D\|_p^p
    &=\frac{\pi}{2^{2p}(p+1)}(1+\varep)^{p+1}-\frac{\pi}{2^p(p+1)(1+(1+\varep)^2)^p}(1+\varep)^{2p+1}\\
    &=\frac{\pi}{2^{2p}(p+1)}(1+\varep)^{p+1}\left(
        1-\Big(1-\frac{\varep^2}{1+(1+\varep)^2}\Big)^p
    \right)\\
    &=C_p\varep^2+o(\varep^2)
\end{align*}
for some $C_p>0$, and 
\begin{align*}
    \delta_\infty(D)
    =1-\frac{\|u_D\|_\infty}{\|u_B\|_\infty}
    =1-\frac{2(1+\varep)}{1+(1+\varep)^2}
    =\frac{\varep^2}{1+(1+\varep)^2}.
\end{align*}
This observation shows that the exponent of $A(D)$ in \eqref{eq:quantLpelt} cannot be replaced by any smaller number than 2. It is expected that the sharp exponent would be 2, which is an interesting open problem. 
\end{remark}

Brasco, De Philippis, and Velichkov \cite{Brasco2015a} showed that the sharp exponent of \eqref{eq:quantSV} is 2 in a sense that the power cannot be replaced by any smaller number. Their method, however, does not give an explicit dimensional constant because the proof relies on the selection principle of Cicalese and Leonardi \cite{Cicalese2012a}. 

The key step in the proof of Theorem \ref{mainthm2} is the removal of $t_\ast$ defined in \eqref{eq:tstar}. In \cite{Brasco2016a}, the authors proved the non-sharp quantitative Saint-Venant inequality \eqref{eq:quantSV} using transfer of asymmetry (\ref{lem:prop_asym}) and the boosted P\'olya--Szeg\"o inequality. Thus $t_\ast$ also appears in their proof. To replace $t_\ast$ by $A(D)$ (up to a dimensional constant), they made use of the variational representation for $T(D)$ \eqref{eq:TD-varfor}. In our case, however, the $L^p$ norm of the expected lifetime does not have an appropriate variational formula for $1<p\leq \infty$. Instead, we estimate the distribution function of $u_D$ when $t_\ast$ is sufficiently small and apply the layer cake representation and the strong Markov property. It turns out that this enables us to replace $t_\ast$ by $A(D)$. 

The fractional analogue of \eqref{eq:quantSV} is proven in \cite{Brasco2019a}. They show that if $n\geq 2$, $\alpha\in(0,2)$, and $D$ is an open set with $|D|=1$, then 
\begin{align*}
    T_\alpha(B)-T_\alpha(D)\geq C_{n,\alpha}A(D)^{\frac{6}{\alpha}}
\end{align*}
where $C_{n,\alpha}$ is explicit and $B$ is a ball with $|B|=1$. Here $T_\alpha(D)$ is the fractional torsional rigidity defined in \eqref{eq:Talpha-def}. Furthermore, they prove that if $D$ has Lipschitz boundary and satisfies the exterior ball condition, then the exponent can be lowered to $2+\frac{2}{\alpha}$. It turns out that our method for removing $t_\ast$ yields the same exponent without any additional geometric assumptions on $D$.

\begin{theorem}\label{main3}
    If $n\geq 2$, $\alpha\in(0,2)$, and $D$ is an open set with $|D|=1$, then 
    \begin{align*}
        T_\alpha(B)-T_\alpha(D)\geq C_{n,\alpha}A(D)^{2+\frac{2}{\alpha}}
    \end{align*}
    where $B$ is a ball with $|B|=|D|$.
\end{theorem}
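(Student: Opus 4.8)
I would combine the fractional quantitative Saint-Venant machinery of \cite{Brasco2019a} --- which produces an estimate containing the threshold $t_{\ast}$ of \eqref{eq:tstar} --- with the probabilistic removal of $t_{\ast}$ used for Theorem \ref{mainthm2}, and then split into two cases according to the size of $t_{\ast}$. Assume $A(D)>0$ (otherwise $D$ is a ball up to a null set and both sides vanish) and normalize $|D|=1$; write $B$ for the ball with $|B|=1$, $u=u^{\alpha}_{D}$ (so that $T_{\alpha}(D)=\int_{D}u$), $\mu(t)=|\{u>t\}|$, $D_{t}=\{x\in D:u(x)>t\}$, and take $t_{\ast}$ as in \eqref{eq:tstar}. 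From \cite{Brasco2019a} --- the fractional P\'olya--Szeg\"o inequality, the transfer of asymmetry (\ref{lem:prop_asym}), and the sharp quantitative isoperimetric inequality \eqref{eq:quantiso} applied to the level sets $D_{t}$ for $0<t<t_{\ast}$ --- one obtains the preliminary bound
\begin{equation*}
    T_{\alpha}(B)-T_{\alpha}(D)\ \geq\ c_{n,\alpha}\,A(D)^{2}\int_{0}^{t_{\ast}}\Big(-\tfrac{d}{dt}\big|\{u^{\alpha}_{B}>t\}\big|\Big)\,dt\ \geq\ c_{n,\alpha}\,t_{\ast}^{\,2/\alpha}\,A(D)^{2},
\end{equation*}
the fractional analogue of $T(B)-T(D)\gtrsim t_{\ast}A(D)^{2}$; the power $2/\alpha$ of $t_{\ast}$ comes from $u^{\alpha}_{B}(x)=C_{n,\alpha}(R^{2}-|x|^{2})^{\alpha/2}$, whose distribution function has density $\asymp t^{2/\alpha-1}$ near $t=0$.

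To remove $t_{\ast}$ I would estimate the distribution function of $u$ when $t_{\ast}$ is small and use the layer-cake representation and the strong Markov property, as indicated for Theorem \ref{mainthm2}. Since $u\in C^{\infty}(D)$, the set $D_{t_{\ast}}$ is open, and right-continuity of $\mu$ together with the definition of $t_{\ast}$ gives $|D_{t_{\ast}}|=\mu(t_{\ast})\leq 1-\tfrac14 A(D)$. Put $\tau=\tau^{\alpha}_{D_{t_{\ast}}}\leq\tau^{\alpha}_{D}$. Since $X^{\alpha}_{\tau}\notin D_{t_{\ast}}$ almost surely and $u\leq t_{\ast}$ on $\R^{n}\setminus D_{t_{\ast}}$ (indeed $u=0$ off $D$ and $u\leq t_{\ast}$ on $D\setminus D_{t_{\ast}}$), the strong Markov property yields, for $x\in D_{t_{\ast}}$,
\begin{equation*}
    u(x)=\E^{x}[\tau]+\E^{x}\big[u(X^{\alpha}_{\tau})\big]\leq u^{\alpha}_{D_{t_{\ast}}}(x)+t_{\ast}.
\end{equation*}
Integrating over $D$, using $u\leq t_{\ast}$ on $D\setminus D_{t_{\ast}}$ and $|D_{t_{\ast}}|+|D\setminus D_{t_{\ast}}|=1$, gives $T_{\alpha}(D)\leq T_{\alpha}(D_{t_{\ast}})+t_{\ast}$. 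By the fractional Saint-Venant inequality applied to $D_{t_{\ast}}$ and the scaling $T_{\alpha}(\lambda E)=\lambda^{n+\alpha}T_{\alpha}(E)$ we have $T_{\alpha}(D_{t_{\ast}})\leq|D_{t_{\ast}}|^{(n+\alpha)/n}T_{\alpha}(B)\leq(1-\tfrac14 A(D))^{(n+\alpha)/n}T_{\alpha}(B)$, so, since $1-(1-s)^{\beta}\geq s$ for $\beta\geq1$ and $s\in[0,1]$,
\begin{equation*}
    T_{\alpha}(B)-T_{\alpha}(D)\ \geq\ \big(1-(1-\tfrac14 A(D))^{(n+\alpha)/n}\big)T_{\alpha}(B)-t_{\ast}\ \geq\ \tfrac14 T_{\alpha}(B)\,A(D)-t_{\ast}.
\end{equation*}

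Finally, with $c_{0}=\tfrac14 T_{\alpha}(B)$, I would close by a dichotomy. If $t_{\ast}\geq\tfrac12 c_{0}A(D)$, the preliminary bound gives $T_{\alpha}(B)-T_{\alpha}(D)\geq c_{n,\alpha}(\tfrac12 c_{0})^{2/\alpha}A(D)^{2+2/\alpha}$. If $t_{\ast}<\tfrac12 c_{0}A(D)$, the bound of the previous step gives $T_{\alpha}(B)-T_{\alpha}(D)\geq\tfrac12 c_{0}A(D)\geq\tfrac12 c_{0}\,2^{-1-2/\alpha}A(D)^{2+2/\alpha}$, using $A(D)\leq 2$. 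Taking $C_{n,\alpha}$ to be the smaller of the two constants proves the theorem. The routine ingredients are the scaling of $T_{\alpha}$ and the elementary estimate for $1-(1-s)^{\beta}$; the genuinely new part is the probabilistic step, whose only delicate point is the overshoot claim $X^{\alpha}_{\tau}\notin D_{t_{\ast}}$ (valid because $D_{t_{\ast}}$ is open and $\alpha$-stable paths are right-continuous) together with the interior continuity of $u$. The main external input, and the step I would have to track most carefully, is the preliminary $t_{\ast}$-estimate imported from \cite{Brasco2019a} with the correct power $2/\alpha$ of $t_{\ast}$.
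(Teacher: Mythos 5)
Your dichotomy framework and the probabilistic removal of $t_{\ast}$ are sound ideas, and the second half of your argument (the strong Markov step $T_{\alpha}(D)\leq T_{\alpha}(D_{t_{\ast}})+t_{\ast}$, combined with the fractional Saint--Venant inequality and the elementary estimate $1-(1-s)^{\beta}\geq s$) is correct and is in fact a cleaner alternative to what the paper does in this case, where the paper instead compares the distribution functions $\mu_{B}(t)$ and $\mu_{D}(t)$ over an interval $[t_{0},2t_{0}]$ via the layer-cake formula. However, the first half contains a genuine gap. The preliminary $t_{\ast}$-estimate that the paper extracts from \cite[Theorem 1.3]{Brasco2019a} is
\begin{equation*}
    T_{\alpha}(B)-T_{\alpha}(D)\;\geq\; C_{n,\alpha}\,T_{\alpha}(B)^{2}\,(t_{\ast})^{\frac{4}{\alpha}}A(D)^{\frac{2}{\alpha}},
\end{equation*}
not $c_{n,\alpha}\,(t_{\ast})^{2/\alpha}A(D)^{2}$ as you claim. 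The powers are not equivalent up to rescaling, and your heuristic for the $2/\alpha$ exponent (coarea plus quantitative isoperimetric inequality applied to the level sets $D_{t}$) is precisely the argument that does \emph{not} carry over to the nonlocal setting: as the paper itself points out in \S 4, the coarea device is unavailable for $(-\Delta)^{\alpha/2}$, and \cite{Brasco2019a} instead goes through the Caffarelli--Silvestre extension; the fractional P\'olya--Szeg\"o inequality with remainder that is accessible by level sets (Proposition~\ref{prop:frac_quant_PolyaSzego}) concerns $[u]_{\alpha,1}$, whereas $T_{\alpha}$ is governed by $[u]_{\alpha,2}$, for which a quantitative improvement is explicitly stated as an open problem.

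This matters quantitatively. Your strong Markov step forces the dichotomy threshold to be of order $A(D)$ (you need $t_{\ast}\lesssim A(D)$ for $\tfrac14 T_{\alpha}(B)A(D)-t_{\ast}$ to remain positive). But feeding $t_{\ast}\gtrsim A(D)$ into the \emph{actual} Brasco--Cinti--Vita bound yields only
\begin{equation*}
    T_{\alpha}(B)-T_{\alpha}(D)\;\gtrsim\; A(D)^{\frac{4}{\alpha}}\cdot A(D)^{\frac{2}{\alpha}}\;=\;A(D)^{\frac{6}{\alpha}},
\end{equation*}
which for $\alpha<2$ is the \emph{weaker} exponent $6/\alpha>2+\tfrac{2}{\alpha}$ --- exactly the exponent \cite{Brasco2019a} already had for general domains. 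The paper circumvents this by taking the larger threshold $t_{0}\asymp A(D)^{\alpha/2}$ (so that $(t_{\ast})^{4/\alpha}\gtrsim A(D)^{2}$ in the large-$t_{\ast}$ case) and, in the small-$t_{\ast}$ case, integrating $\mu_{B}(t)-\mu_{D}(t)\geq\tfrac{1}{18}A(D)$ over $[t_{0},2t_{0}]$, which gives $T_{\alpha}(B)-T_{\alpha}(D)\gtrsim t_{0}A(D)\gtrsim A(D)^{1+\alpha/2}$ --- comfortably stronger than $A(D)^{2+2/\alpha}$. Your strong Markov bound, by contrast, becomes useless at that threshold because $A(D)^{\alpha/2}\gg A(D)$ for small $A(D)$. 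So either the preliminary bound must be corrected to the form in \cite{Brasco2019a} and the small-$t_{\ast}$ case handled by the distribution-function comparison as in the paper, or one would need to actually prove the stronger preliminary estimate you posit --- which is not available.
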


The paper is organized as follows. In Section \ref{sec:prlm}, we review basic facts about the torsional rigidity and transfer of asymmetry, which is a key idea of the main results. Indeed, we give a proof of Lemma \ref{lem:prop_asym}, which was essentially proven by Hansen and Nadirashvili \cite{Hansen1994a}, and explain how this idea can be applied to our context (see Remark \ref{rmk:transferasymmetry}). In Section \ref{sec:pfs}, we give proofs of the main results.  In Section \ref{sec:prbls}, we discuss related open problems; extensions of Theorem \ref{mainthm1} and Theorem \ref{mainthm2} to the $\alpha$-stable processes, finding the sharp exponent of the main results, and quantitative improvements of \eqref{eq:stableDist} and \eqref{eq:stable-pmmt} even for Brownian motion. We also investigate the extension of Caffarelli--Silvestre \cite{Caffarelli2007a}, a fractional P\'olya--Szeg\"o inequality with a remainder term, and its relation to the fractional torsional rigidity. 

\section{Preliminaries}\label{sec:prlm}

\subsection{Torsional rigidity}

Let $\alpha\in(0,2]$ and $D$ a bounded domain in $\R^{n}$. Let $X^{\alpha}_t$ be the rotationally symmetric $\alpha$--stable process with generator $-(-\Delta)^{\alpha/2}$. Let $\tau^{\alpha}_{D}$ be the first exit time of $X^{\alpha}_{t}$ from $D$ and $u^{\alpha}_{D}(x)=\E^{x}[\tau^{\alpha}_{D}]$ the expected lifetime where $\E^{x}$ is the expectation associated with $X^\alpha_t$ starting at $x\in\R^n$. Let $P_{t}^\alpha$ be the semigroup associated with $X^\alpha_t$ killed upon exiting $D$ given by $P^\alpha_tf(x)=\E^x[f(X^\alpha_t); t<\tau_D^\alpha]$ on $L^2(D)$. The general semigroup theory yields (see \cite{Davies1989a}) that there exists an orthonormal basis $\{\varphi_{n}\}$ of $L^2(D)$ and the corresponding eigenvalues $0<\lambda_1<\lambda_2\leq \lambda_3\leq\cdots$ such that $P^\alpha_t\varphi_n=e^{-t\lambda_n}\varphi_n$ and $(-\Delta)^{\alpha/2}\varphi_n=\lambda_n\varphi_n$. Using the representation of the transition density of $X^\alpha_t$
\begin{equation*}
    p_t(x,y)=\sum_{n=1}^{\infty}e^{-\lambda_n t}\varphi_n(x)\varphi_n(y),
\end{equation*}
one obtains
\begin{equation*}
    \P^x(\tau_D^\alpha>t)
    =\int_D p_t(x,y)dy=\sum_{n=1}^{\infty}e^{-\lambda_n t}\|\varphi_n\|_1\varphi_n(x)
\end{equation*}
and 
\begin{equation*}
    u_D^\alpha(x)
    =\int_0^\infty\P^x(\tau_D^\alpha>t)dt 
    =\sum_{n=1}^{\infty}\frac{\|\varphi_n\|_1}{\lambda_n}\varphi_n(x).
\end{equation*}
In addition (see \cite[Theorem 4.4]{Bogdan2009a}), there exist constants $c_1, c_2$ depending on $D$ and $\alpha$ such that $c_1 u_D^\alpha(x)\leq \varphi_1(x)\leq c_2 u_D^\alpha(x)$ for all $x\in D$. For further information, we refer the reader to \cite{Bogdan2009a} and the references therein. 

The classical torsional rigidity of $D$ is defined by $T(D)=\|u_D\|_1$ for $\alpha=2$. In this context, $u_D(x)$ is also called the torsion function of $D$. Let $W^{1,2}_{0}(D)$ be the completion of $C^{\infty}_{0}(D)$ with respect to the norm $u\mapsto \|\nabla u\|_{2}$. We have variational representations of the torsional rigidity
\begin{equation}\label{eq:TD-varfor}
    T(D)
    =\max
    \left\{\frac{\|u\|_{1}^{2}}{\|\nabla u\|_{2}^{2}}:u\in W^{1,2}_{0}(D),u\neq 0\right\}
    =\max
    \left\{2\|u\|_{1}-\|\nabla u\|_{2}^{2}:{u\in W^{1,2}_{0}(D)}\right\}. 
\end{equation}
Since $u_{D}$ is an optimizer for \eqref{eq:TD-varfor}, we have $T(D)=\|u_D\|_1=\|\nabla u_D\|_2^2$. There are two important inequalities concerning the torsional rigidity. The Saint-Venant inequality, which is an isoperimetric inequality for $T(D)$, states that if $D$ is a set of finite measure in $\R^n$ then 
\begin{equation}\label{eq:saint-venant}
    |B|^{-\frac{n+2}{n}}T(B)\geq |D|^{-\frac{n+2}{n}}T(D)
\end{equation}
where $B$ is a ball. The second is the Kohler-Jobin inequality, which states that 
\begin{equation*}
    \lambda_1(D)T(D)^{\frac{2}{n+2}}\geq \lambda_1(B)T(B)^{\frac{2}{n+2}}.
\end{equation*}
Note that the classical Faber--Krahn inequality for the first eigenvalue $\lambda_1$ follows from these two inequalities for $T(D)$. Indeed, one has
\begin{equation*}
    \frac{\lambda_1(D)}{\lambda_1(B)}
    \geq \left(\frac{T(B)}{T(D)}\right)^{\frac{2}{n+2}}
    \geq \left(\frac{|B|}{|D|}\right)^{\frac{2}{n}}.
\end{equation*} 
Furthermore, the authors in \cite{Brasco2015a} showed that stability of the Saint-Venant inequality implies that of the Faber--Krahn inequality for the first semilinear eigenvalue via the Faber--Krahn hierarchy.  

The fractional torsional rigidity for $0<\alpha<2$ is defined by 
\begin{equation}\label{eq:Talpha-def}
    T_\alpha(D)
    =\int_D u_D^\alpha(x)\, dx
    =\int_D\int_0^{\infty} \P^{x}(\tau_{D}^{\alpha}>t) \,dt dx.
\end{equation}
There has been recent progress in the study of the fractional torsional rigidity. The isoperimetric inequality for $T_\alpha(D)$, a fractional analogue of the Saint-Venant inequality, follows from \cite[Corollary 5.4]{Banuelos2010a} where the isoperimetric inequality was proven for a general class of L\'evy processes. For the stable processes, it also follows from the sharp rearrangement inequality of \cite[Theorem A.1]{Frank2008b}. Recently, Brasco, Cinti, and Vita \cite{Brasco2019a} derived a quantitative version of the fractional Faber--Krahn inequality and that of the fractional Saint-Venant inequality as a corollary. Their method is based on the extension of \cite{Caffarelli2007a} and the symmetrization argument of \cite{Fusco2011a}. 

\subsection{Transfer of asymmetry}
The following lemma is essentially from \cite[Lemma 5.1]{Hansen1994a}, which provides an estimate of asymmetries of two sets when these sets are close in $L^1$ sense. We refer the reader to \cite[Lemma 4.1]{Brasco2019a} for its generalization. 
\begin{lemma}[{\cite[Lemma 2.8]{Brasco2016a}}]\label{lem:prop_asym}
    Let $D\subseteq \R^{n}$ be an open set with finite measure, $U\subseteq D$, $|U|>0$, and
    \begin{equation*}
        \frac{|D\setminus U|}{|D|}\leq kA(D)
    \end{equation*}
    for $k\in(0,\frac{1}{2})$.
    Then, $A(U)\geq (1-2k)A(D)$.
\end{lemma}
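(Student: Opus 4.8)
The plan is to relate the Fraenkel asymmetry of $U$ back to that of $D$ by picking a near-optimal ball for $D$ and showing it is still reasonably good for $U$. First I would choose a ball $B$ with $|B|=|D|$ realizing (or nearly realizing) $A(D)$, so that $|D\triangle B|=A(D)|D|$ (if the infimum is not attained, work with an $\varepsilon$-optimal ball and let $\varepsilon\to0$ at the end). Next I would rescale: let $B'$ be the ball concentric with $B$ scaled so that $|B'|=|U|$; since $|U|\le|D|=|B|$, we have $B'\subseteq B$ and $|B\setminus B'|=|D|-|U|=|D\setminus U|$, where I use $U\subseteq D$.

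The key estimate is then a triangle-inequality bound on $|U\triangle B'|$. Write
\begin{align*}
    |U\triangle B'|
    &\le |U\triangle D| + |D\triangle B| + |B\triangle B'|\\
    &= |D\setminus U| + A(D)|D| + |D\setminus U|\\
    &\le kA(D)|D| + A(D)|D| + kA(D)|D|,
\end{align*}
using $|U\triangle D|=|D\setminus U|$ (since $U\subseteq D$), $|B\triangle B'|=|B\setminus B'|=|D\setminus U|$, and the hypothesis $|D\setminus U|\le kA(D)|D|$. Hence $|U\triangle B'|\le (1+2k)A(D)|D|$. To turn this into a bound on $A(U)$ I divide by $|U|$; since $|U|=|D|-|D\setminus U|\ge |D|(1-kA(D))\ge |D|(1-k)$ (using $A(D)\le 1$ and $k<\tfrac12$), I get
\begin{align*}
    A(U)\le \frac{|U\triangle B'|}{|U|}\le \frac{(1+2k)A(D)|D|}{(1-k A(D))|D|}=\frac{1+2k}{1-kA(D)}A(D).
\end{align*}
This, however, gives an upper bound on $A(U)$, which is the wrong direction — so I would instead reverse the roles: start from an optimal ball for $U$ and transfer it to $D$.

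Concretely, let $\wt B$ be a ball with $|\wt B|=|U|$ attaining $A(U)$, and let $\wh B\supseteq\wt B$ be concentric with $|\wh B|=|D|$, so $|\wh B\setminus\wt B|=|D\setminus U|\le kA(D)|D|$. Then
\begin{align*}
    A(D)|D|\le |D\triangle\wh B|
    \le |D\triangle U|+|U\triangle\wt B|+|\wt B\triangle\wh B|
    \le |D\setminus U|+A(U)|U|+|D\setminus U|,
\end{align*}
so $A(D)|D|\le A(U)|U|+2kA(D)|D|\le A(U)|D|+2kA(D)|D|$ (using $|U|\le|D|$), giving $A(U)\ge (1-2k)A(D)$ after dividing by $|D|$. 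The only real subtlety — and the step I expect to require the most care — is the attainment of the Fraenkel infimum: if $A(U)$ or $A(D)$ is not attained by an actual ball, one replaces the optimal ball by an $\varepsilon$-optimal one throughout, carries the extra $\varepsilon|D|$ term, and lets $\varepsilon\downarrow0$; since all inequalities are non-strict this causes no difficulty. Everything else is the triangle inequality for the symmetric difference together with the two elementary facts $|D\triangle U|=|D\setminus U|$ when $U\subseteq D$ and $|\wh B\setminus\wt B|=|D|-|U|$ for concentric balls of those volumes.
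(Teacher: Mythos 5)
Your final argument is correct and is essentially the paper's own proof: pick the optimal ball $\wt B$ for $U$, enlarge it concentrically to $\wh B$ of volume $|D|$, apply the triangle inequality for the symmetric difference to bound $|D\triangle\wh B|$ from above by $|D\setminus U|+A(U)|U|+|D\setminus U|$, and then use $|D\triangle\wh B|\ge A(D)|D|$ together with $|U|\le|D|$. The paper writes the same chain of inequalities in the equivalent form $A(U)\ge\bigl(|D\triangle(x+B_2)|-|U\triangle D|-|B_1\triangle B_2|\bigr)/|D|$, so there is no substantive difference. Your remark about the Fraenkel infimum is a fair point of care — though in fact the infimum is attained for sets of finite measure (the map from ball-centers to $|U\triangle B|$ is continuous and tends to $2|U|$ at infinity), so the $\varepsilon$-optimal workaround is not strictly needed; the paper simply assumes attainment. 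The first half of your write-up, which bounds $A(U)$ from above, is a dead end that you yourself identify, so you may as well delete it.
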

\begin{proof}
Let $B_{1}$ be a ball centered at 0 with $|B_{1}|=|U|$ satisfying 
\begin{equation*}
    A(U)=\frac{|U\triangle (x+B_{1})|}{|U|}
\end{equation*}
for some $x\in\R^{n}$ and $B_{2}$ a ball centered at 0 with $|B_{2}|=|D|$. Note that $|U\triangle D|=|D\setminus U|=|B_{1}\triangle B_{2}|$. Using the triangular inequality for the symmetric difference, one can see that
\begin{align*}
    A(U)
    &= \frac{|U\triangle (x+B_{1})|}{|U|}\\
    &\geq \frac{|D\triangle (x+B_{2})|-|U\triangle D|-|B_{1}\triangle B_{2}|}{|D|}\\
    &\geq A(D)-2\frac{|D\setminus U|}{|D|}\\
    &\geq (1-2k)A(D).
\end{align*}
\end{proof}

\begin{remark}\label{rmk:transferasymmetry}
Let $D$ be a bounded domain in $\R^n$, $u$ a nonnegative function defined in $D$, and $D_{t}=\{x:u(x)>t\}$ for $t>0$. Assume $A(D)>0$ and 
\begin{equation*}
    t_{\ast}=\sup\{t>0:\mu(t)>|D|(1-\frac{1}{4}A(D))\}>0.
\end{equation*}
If $t<t_{\ast}$, then we have
\begin{equation}\label{eq:levelset_est}
    \frac{|D\setminus D_t|}{|D|}
    =1-\frac{\mu(t)}{|D|}
    \leq 1-(1-\frac{1}{4}A(D))
    =\frac{1}{4}A(D),
\end{equation}
which yields $A(D_t)\geq \frac{1}{2}A(D)$ by Lemma \ref{lem:prop_asym}.     
\end{remark}

\section{Proofs of the main results}\label{sec:pfs}

\subsection{Proof of Theorem \ref{mainthm1}}

Suppose $|D|=1$. Let $D_{t}=\{x\in D:u(x)>t\}$, $\mu(t)=|D_{t}|$, and $u(x)=u_{D}(x)$. By the coarea formula, we have
\begin{align}\label{eq:coarea}
    \left(-\frac{d}{dt}\int_{D_t}|\nabla u|\, dx\right)^2
    \geq P(D_t)^2
\end{align}
for almost every $t>0$. Note that the sharp quantitative isoperimetric inequality \cite{Fusco2008a} states
\begin{equation}\label{eq:quantiso}
    P(D)\geq P(B)+\beta_n A(D)^2
\end{equation}
where $B$ is a ball with $|B|=|D|=1$ and $\beta_n$ is a dimensional constant. A simple manipulation gives
\begin{align}\label{eq:quantisoperi}
    P(D_{t})^{2}
    &\geq P(D_{t}^{\ast})^{2}+2P(D_{t}^{\ast})(P(D_{t})-P(D_{t}^{\ast}))\\
    &\geq P(D_{t}^{\ast})^{2}+(2n\omega_{n}^{\frac{1}{n}}\beta_{n})\mu(t)^{2-\frac{2}{n}}A(D_{t})^{2}\nonumber\\
    &\geq n^2\omega_{n}^{\frac{2}{n}}\mu(t)^{2-\frac{2}{n}}\left(1+\frac{2}{n}\beta_{n}\omega_{n}^{-\frac{1}{n}}A(D_{t})^{2}\right)\nonumber
\end{align}
where $\omega_{n}$ is the volume of the unit ball in $\R^{n}$ and $D^\ast_t$ is a ball with $|D_t|=|D^\ast_t|$. It follows from Cauchy--Schwarz inequality that 
\begin{align}\label{eq:csineq}
    (-\mu'(t))^{\frac{1}{2}}
    \left(-\frac{d}{dt}\int_{D_t}|\nabla u|^2\, dx\right)^{\frac{1}{2}}
    \geq -\frac{d}{dt}\int_{D_t}|\nabla u|\, dx. 
\end{align}
Combining \eqref{eq:coarea}, \eqref{eq:quantisoperi}, and \eqref{eq:csineq}, we get
\begin{align*}
    -\mu'(t)
    \left(-\frac{d}{dt}\int_{D_t}|\nabla u|^2\, dx\right)
    \geq n^2\omega_{n}^{\frac{2}{n}}\mu(t)^{2-\frac{2}{n}}\left(1+\frac{2}{n}\beta_{n}\omega_{n}^{-\frac{1}{n}}A(D_{t})^{2}\right)
\end{align*}
for almost every $t>0$. Since $u$ is a weak solution of $-\Delta u=1$ in $D$, 
\begin{align*}
    \int_D \varphi\, dx = \int_D \nabla u\cdot \nabla \varphi \,dx
\end{align*}
for all $\varphi\in W^{1,2}_0(D)$. Let $\varphi(x)=(u(x)-t)_+$, then it belongs to $\varphi\in W^{1,2}_0(D)$ and
\begin{align*}
    \int_{D_t}(u-t)\, dx = \int_{D_t}|\nabla u|^2\, dx.
\end{align*}
For small $h\in\R$, 
\begin{align*}
    \frac{1}{h}\left(
        \int_{D_t}|\nabla u|^2\, dx-\int_{D_{t+h}}|\nabla u|^2\, dx
        \right) 
    =\mu(t+h)+\int_{D_t\triangle D_{t+h}}\Big|\frac{u-t}{h}\Big|\, dx.
\end{align*}
Since $0\leq |u-t|\leq |h|$ in $D_t\triangle D_{t+h}$ and $|D_t\triangle D_{t+h}|\to 0$ as $h\to 0$, we obtain
\begin{align}\label{eq:deri_energy}
    \mu(t)= -\frac{d}{dt}\int_{D_t}|\nabla u|^2\, dx.
\end{align}
Therefore, we have
\begin{equation}\label{eq:thm_3_1}
    -\mu(t)^{\frac{2}{n}-1}\mu'(t)
    \geq n^{2}\omega_{n}^{\frac{2}{n}}\left(1+\frac{2}{n}\beta_{n}\omega_{n}^{-\frac{1}{n}}A(D_{t})^{2}\right)
\end{equation}
for almost every $t>0$.

For each $t>0$, choose $R(t)>0$ such that $\mu(t)=|B_{R(t)}(0)|$, where $B_{R(t)}(0)$ is the ball of radius $R(t)$, centered at 0. Let $\tau_{R(t)}$ be the first exit time from $B_{R(t)}(0)$. Since $\E^{x}[\tau_{R(t)}]=\frac{1}{2n}(R(t)^{2}-|x|^{2})$, we have
\begin{equation}\label{eq:eltformula}
    \E^{0}[\tau_{R(t)}]
    =\frac{1}{2n}\omega_{n}^{-\frac{2}{n}}\mu(t)^{\frac{2}{n}}.
\end{equation}
Differentiating of the both sides in $t$ and applying \eqref{eq:thm_3_1}, we have
\begin{equation*}
    -\frac{d}{dt}\E^{0}[\tau_{R(t)}]
    =-\frac{1}{n^{2}}\omega_{n}^{-\frac{2}{n}}\mu(t)^{\frac{2}{n}-1}\mu'(t)
    \geq 1+\frac{2}{n}\beta_{n}\omega_{n}^{-\frac{1}{n}}A(D_{t})^{2}
\end{equation*}
for almost every $t>0$. Taking the integral over $[0, u_D(x)]$ and applying \eqref{eq:eltformula}, we have
\begin{align*}
    u_B(0)-\frac{1}{2n}\omega_{n}^{-\frac{2}{n}}\mu(u_{D}(x))^{\frac{2}{n}} 
    &=\E^{0}[\tau_{R(0)}]-\E^{0}[\tau_{R(u_D(x))}]\\
    &\geq u_D(x)+\frac{2}{n}\beta_{n}\omega_{n}^{-\frac{1}{n}}\int_{0}^{u(x)}A(D_{t})^{2}\,dt.
\end{align*}
By Lemma \ref{lem:prop_asym} and Remark \ref{rmk:transferasymmetry}, we have $A(D_{t})\geq \frac{1}{2}A(D)$ for $t<t_{\ast}$ and
\begin{equation*}
    \int_{0}^{u(x)}A(D_{t})^{2}\,dt
    \geq \int_{0}^{u(x)\wedge t_{\ast}}A(D_{t})^{2}\,dt
    \geq \frac{1}{4}(u(x)\wedge t_{\ast})A(D)^{2}.
\end{equation*}
Therefore, we obtain
\begin{align*}
    u_B(0)-u_D(x)
    &\geq \frac{1}{2n\omega_{n}^{n/2}}\mu(u_{D}(x))^{\frac{2}{n}} +\frac{2}{n}\beta_{n}\omega_{n}^{-\frac{1}{n}}\int_{0}^{u(x)}A(D_{t})^{2}\,dt\\
    &\geq u_B(0)\left(\mu(u_{D}(x))^{\frac{2}{n}} +\sfC_{n}(u(x)\wedge t_{\ast})A(D)^{2}\right)
\end{align*}
where $\sfC_n=\beta_{n}\omega_{n}^{\frac{1}{n}}$.

Suppose that $|D|=r^{-n}$ for some $r>0$. By translation invariance, we assume $0\in D$ without loss of generality. For $r>0$, we denote by $rD=\{ry:y\in D\}$. Note that the Fraenkel asymmetry is scaling invariant, i.e. $A(D)=A(rD)$. By the scaling property of  $X_t$, we have $r^{2}u_D(x)=u_{rD}(rx)$. This leads to the following scaling identities
    \begin{align*}
        \delta(x,D)&= \delta(rx,rD),\\ 
        \mu_D(t)&= |\{y:u_D(y)>t\}|
            =|\{y:u_{rD}(ry)>r^2 t\}|
            =r^{-n}\mu_{rD}(r^2 t),\\
        t_\ast(D)
            &=\sup\{t>0:\mu_D(t)>|D|(1-\frac{1}{4}A(D))\}\\
            &=\sup\{t>0:\mu_{rD}(r^2 t)>|rD|(1-\frac{1}{4}A(rD))\} \\
            &=r^{-2}t_\ast(rD).
    \end{align*}
Since $|rD|=1$, we have
\begin{align*}
    \delta(x,D)
    &= \delta(rx, rD) \\
    &\geq  \mu(u_{rD}(rx))^{\frac{2}{n}} +\sfC_{n}(u_{rD}(rx)\wedge t_{\ast}(rD))A(rD)^{2}\\
    &= r^2\left(\mu(u_{D}(x))^{\frac{2}{n}} +\sfC_{n}(u(x)\wedge t_{\ast})A(D)^{2}\right)\\
    &= |D|^{-\frac{2}{n}}\left(\mu(u_{D}(x))^{\frac{2}{n}} +\sfC_{n}(u(x)\wedge t_{\ast})A(D)^{2}\right),
\end{align*}
as desired.
\qed

\subsection{Proof of Theorem \ref{mainthm2}}
If $A(D)=0$, the results follow from \eqref{eq:Talenti}. From now on, we assume $A(D)>0$. By scaling invariance, we assume $|D|=1$ without loss of generality. Let $B$ be a ball centered at 0 with $|B|=1$.

Consider $p\in[1,\infty)$. Let $D_{t}=\{x\in D:u_D(x)>t\}$ and $\mu_D(t)=\mu(t)=|D_{t}|$. Note that Theorem \ref{mainthm1} reads
\begin{align*}
    \frac{1}{2n\omega_n^{2/n}}(1-\mu(u_D(x))^{2/n})-u_D(x)\geq \tilde{\sfC}_n (u_D(x)\wedge t_{\ast})A(D)^2
\end{align*}
where $\tilde{\sfC}_n=\frac{1}{2n\omega_n^{2/n}}\sfC_n$. By the coarea formula, we have
\begin{align*}
    \frac{1}{(2n)^p \omega_n^{2p/n}}\int_D (1-\mu(u_D(x))^{2/n})^p \,dx
    &=\frac{1}{(2n)^p \omega_n^{2p/n}} \int_0^\infty\int_{\partial D_{t}}(1-\mu(u_D(x))^{2/n})^p|\nabla u_D|^{-1}d\cH^{n-1}(x)\,dt\\
    &= -\frac{1}{(2n)^p \omega_n^{2p/n}}\int_0^\infty (1-\mu(t)^{2/n})^p\mu'(t)\,dt\\
    &= \frac{1}{2^{p+1}n^{p-1}\omega_n^{2p/n}}B(p,(n-2)/2)\\
    &=\|u_B\|_p^p
\end{align*}
where $B(a,b)$ is the Beta function. Using $a^p-b^p\geq p b^{p-1}(a-b)$ for $a\geq b$, we get
\begin{align}\label{eq:destwitht}
    \|u_B\|_p^p-\|u_D\|_p^p
    &\geq \tilde{\sfC}_n A(D)^2\int_D p u_D(x)^{p-1}(u_D(x)\wedge t_{\ast})\,dx \\
    &\geq \tilde{\sfC}_nA(D)^2\int_0^{t_{\ast}} pt^{p-1}\mu(t)\, dt\nonumber\\
    &\geq \frac{1}{2}\tilde{\sfC}_n  A(D)^2 \, (t_{\ast})^{p}.\nonumber
\end{align}
In the last inequality, we used the fact that $\mu(t)>|D|(1-\frac{1}{4}A(D))\geq\frac{1}{2}$ for $0<t<t_\ast$.

Let $\mu_B(t)=|\{x\in B: u_B(x)>t\}|$. Since $u_B(x)=\frac{1}{2n}(r_n^2-|x|^2)$ with $r_n=\omega_n^{-\frac{1}{n}}$, we have
\begin{align}\label{eq:mu0t}
    \mu_B(t)=\big(1-2n\omega_n^{\frac{2}{n}}t\big)^{\frac{n}{2}}.
\end{align}
Choose $t_0>0$ so that $\mu_B(2t_0)=1-\frac{1}{8}A(D)$. By \eqref{eq:mu0t} and the inequality $1-(1-x)^a\geq ax $ for $0\leq x, a\leq 1$, we have
\begin{align}\label{eq:t0est}
    t_0
    =\frac{1}{4n\omega_n^{\frac{2}{n}}}\left(1-(1-\frac{1}{8}A(D))^{\frac{2}{n}}\right)
    \geq  \frac{1}{16n^2\omega_n^{\frac{2}{n}}}A(D).
\end{align}
Suppose $t_\ast<t_0$, then $\mu_D(t)\leq 1-\frac{1}{4}A(D)$ for all $t> t_0$ by definition. Since $\mu_B(t)\geq 1-\frac{1}{8}A(D)$ for $t\leq 2t_0$, we get $\mu_B(t)-\mu_D(t)\geq \frac{1}{8}A(D)$ for $t\in(t_0,2t_0]$. By the layer cake representation and \eqref{eq:t0est}, we have
\begin{align*}
    \|u_B\|_p^p-\|u_D\|_p^p
    &= \int_0^\infty pt^{p-1}(\mu_B(t)-\mu_D(t))\, dt \\
    &\geq \int_{t_0}^{2t_0} pt^{p-1}(\mu_B(t)-\mu_D(t))\, dt \\
    &\geq \frac{p}{8} (t_0)^{p}A(D)\\
    &\geq \frac{p}{2^{4p+3}n^{2p}\omega_n^{\frac{2p}{n}} } A(D)^{1+p}\\ 
    &\geq \frac{p}{2^{4(p+1)}n^{2p}\omega_n^{\frac{2p}{n}} } A(D)^{2+p}.
\end{align*}
If $t_\ast\geq t_0$, then it follows from \eqref{eq:destwitht} and \eqref{eq:t0est} that 
\begin{align*}
    \|u_B\|_p^p-\|u_D\|_p^p
    \geq 
    \frac{\tilde{\sfC}_n}{2^{4p+1}n^{2p}\omega_n^{\frac{2p}{n}}}  A(D)^{2+p}. 
\end{align*}
For $1\leq p<\infty$, we finish the proof of \eqref{eq:quantLpelt} by letting
\begin{align}\label{eq:Cnp_1}
    \sfC_{n, p}
    &=\frac{1}{2^{4(p+1)}n^{2p}\omega_n^{\frac{2p}{n}} \|u_B\|_p^p} \min\{p, 8\tilde{\sfC}_n\}\\
    &=\frac{1}{2^{3(p+1)}n^{p+1} B(p,(n-2)/2)} \min\left\{p, \frac{4\beta_{n}}{n\omega_n^{\frac{1}{n}}}\right\}\nonumber
\end{align}
where $\beta_{n}$ is the constant in \eqref{eq:quantiso}.

Consider the case $p=\infty$. By translation invariance, we assume that $0\in D$ and $u_D(0)=\max_{y\in D}u_D(y)$ without loss of generality. Putting $x=0$ in \eqref{eq:main1}, we get 
\begin{align*}
    \delta_\infty(D)
    \geq \sfC_n  t_\ast A(D)^2.
\end{align*} 
Let $\mu_B(t)=|\{x\in B: u_B(x)>t\}|$ and choose $t_0>0$ so that $\mu_B(2t_0)=1-\frac{1}{8}A(D)$ as above. If $t_\ast\geq t_0$, then it follows from \eqref{eq:t0est} that
\begin{align*}
    \delta_\infty(D)
    \geq  \frac{\sfC_n}{16n^2\omega_n^{\frac{2}{n}}}  A(D)^3.
\end{align*}
Let $t_\ast<t_0$. Let $\varep>0$ be small enough that $t_1:=t_\ast+\varep<t_0$ and $D_1=\{x\in D: u_D(x)>t_1\}$, then $D_1$ is open. Let $\tilde{B}$ be a ball centered at 0 with $|\tilde{B}|=|D_1|$ and $\tilde{t}$ be such that $\mu_B(\tilde{t})=\mu_D(t_1)$. Since $1-\frac{1}{4}A(D)>\mu_D(\tilde{t})$, we have $\tilde{t}>2t_0$. Recall that the strong Markov property of $X_t$ yields for any $x\in U\subset D$ that
\begin{align*}
    \E^x[\tau_D]=\E^x[\tau_{U}]+\E^x[\E^{X_{\tau_U}}[\tau_{D}]].
\end{align*}
Since the paths of $X_t$ are continuous a.s., we have $X_{\tau_{D_1}}\in\partial D_1$ a.s. Since $D_1$ is open, $\partial D_1\subset \R^n\setminus D_1$ and $u_D(y)\leq t_1$ for $y\in \partial D_1$. Then we obtain
\begin{align*}
    \E^0[\tau_D]=\E^0[\tau_{D_1}]+\E^0[\E^{X_{\tau_{D_1}}}[\tau_{D}]]\leq \E^0[\tau_{D_1}]+t_1.
\end{align*}
On the other hand, it follows from a direct computation that $\E^0[\tau_B]=\E^0[\tau_{\tilde{B}}]+\tilde{t}$. Since $\E^0[\tau_{\tilde{B}}]\geq \E^0[\tau_{D_1}]$ by \eqref{eq:stableELI}, we get 
\begin{align*}
    \|u_B\|_\infty-\|u_D\|_\infty
    &= u_B(0)-u_D(0)\\
    &\geq (\E^0[\tau_{\tilde{B}}]-\E^0[\tau_{D_1}])+t_0\\
    &\geq \frac{1}{16n^2\omega_n^{\frac{2}{n}}}A(D)\\
    &\geq \frac{\|u_B\|_\infty}{32n}A(D)^3.
\end{align*}
We complete the proof by letting 
\begin{align}\label{eq:Cnp_2}
    \sfC_{n,\infty}=\frac{1}{32n^2}\min\left\{\ 2\beta_n \omega_n^{-\frac{1}{n}}, n \right\}.
\end{align}
\qed

\subsection{Proof of Theorem \ref{main3} }

Since $A(D)<2$, it suffices to consider the case $\frac{1}{2}T_\alpha(B) \leq T_\alpha(D)$. Let $u_D^\alpha$ be the expected lifetime of the $\alpha$-stable process in $D$, $\mu_D(t)=\mu(t)=|\{y\in D: u_D^\alpha(y)>t\}|$, and $t_\ast=\sup\{t>0:\mu_D(t)>|D|(1-\frac{1}{9}A(D))\}$. By the proof of \cite[Theorem 1.3]{Brasco2019a}, one has
\begin{align}\label{eq:alphatorsion}
    T_\alpha(B)-T_\alpha(D)\geq C_{n,\alpha}T_\alpha(B)^2 (t_\ast)^{\frac{4}{\alpha}}A(D)^{\frac{2}{\alpha}}.
\end{align}
Let $\mu_B(t)=|\{y\in B: u_B^\alpha(y)>t\}|$. Since $u_B^\alpha(x)=C_{n,\alpha}(r^2-|x|^2)^{\frac{\alpha}{2}}$ and $r=\omega_n^{-\frac{1}{n}}$, we have
\begin{align*}
    \mu_B(t)=(1-C_{n,\alpha}t^{\frac{2}{\alpha}})^{\frac{n}{2}}.
\end{align*}
Choose $t_0>0$ such that $\mu_B(2t_0)=1-\frac{1}{18}A(D)$, then
\begin{align}\label{eq:toestalpha}
    t_0=C_{n,\alpha}(1-(1-\frac{1}{18}A(D))^{\frac{2}{n}})^{\frac{\alpha}{2}}
    \geq C_{n,\alpha}A(D)^{\frac{\alpha}{2}}.
\end{align}
If $t_\ast<t_0$, then $\mu_D(t)\leq 1-\frac{1}{9}A(D)$ for all $t\geq t_0$ by definition. Since $\mu_B(t)\geq 1-\frac{1}{18}A(D)$ for $t\leq 2t_0$, we get $\mu_B(t)-\mu_D(t)\geq \frac{1}{18}A(D)$ for $t\in[t_0,2t_0]$. By the layer cake representation and \eqref{eq:t0est}, we have
\begin{align*}
    T_\alpha(B)-T_\alpha(D)
    &= \int_0^\infty (\mu_B(t)-\mu_D(t))\, dt \\
    &\geq \frac{1}{18}t_0 A(D)\\
    &\geq C_{n,\alpha} A(D)^{1+\frac{\alpha}{2}}.
\end{align*}
If $t_\ast\geq t_0$, then by \eqref{eq:alphatorsion} and \eqref{eq:toestalpha} we have
\begin{align*}
    T_\alpha(B)-T_\alpha(D)
    \geq 
    C_{n,\alpha}T_\alpha(B)^2 A(D)^{2+\frac{2}{\alpha}}, 
\end{align*}
which completes the proof.
\qed

\section{Related open problems}\label{sec:prbls}

In this section, we discuss some open problems regarding the inequalities \eqref{eq:stableDist}, \eqref{eq:stableELI}, \eqref{eq:stable-pmmt}, and \eqref{eq:Talenti}. 

\subsection{Brownian motion}

It is open to find quantitative improvements of \eqref{eq:stableDist} and \eqref{eq:stable-pmmt} even for Brownian motion. In particular, it is unclear what is the right statement for stability of \eqref{eq:stableDist}. Having a small deficit of \eqref{eq:stableDist} at some $t$ is not enough to obtain proximity of the region to a ball, which implies that the deficit should be defined in a strong sense. 

As we discussed in Remark \ref{rmk:ellipse}, it is expected that the sharp exponent of \eqref{eq:quantLpelt} is 2 for $1<p\leq \infty$. For $p=1$, the sharp result was derived in \cite{Brasco2015a}. It is, however, not obvious how to apply the method of \cite{Brasco2015a} to the case $1<p\leq\infty$ because the proof strongly replies on the variational formula \eqref{eq:TD-varfor}, whereas the $L^p$ norm of the expected lifetime for $1<p\leq \infty$ does not have such formula.  

In Theorem \ref{mainthm1}, our quantitative result of \eqref{eq:stableELI} for $\alpha=2$ depends on $t_\ast$. It is unclear whether this dependence is necessary. Removing $t_\ast$ in \eqref{eq:main1} is an interesting open problem.

It was shown in \cite{Brasco2015a} that the sharp exponent of $A(D)$ in \eqref{eq:quantSV} is 2. Since the proof is based on the selection principle of \cite{Cicalese2012a}, the constant is not explicit. The best-known exponent with an explicit constant is 3. It is still open to prove a sharp quantitative result of \eqref{eq:quantSV} with a computable dimensional constant.

\subsection{Symmetric stable processes}

As mentioned above, it is open to extend Theorem \ref{mainthm1} and Theorem \ref{mainthm2} to the case $0<\alpha<2$. At this moment, a fractional analogue of the inequality \eqref{eq:Talenti} for $0<\alpha<2$ and $1<p\leq\infty$ is not known. Our approach of Theorem \ref{mainthm1} may not work for this case since it is not obvious how to apply the coarea formula in the fractional setting. A standard way of avoiding this difficulty is to consider the extension of Caffarelli--Silvestre \cite{Caffarelli2007a}. Fusco, Millot, and Morini \cite{Fusco2011a} considered the rearrangement inequality for the extension to show the quantitative isoperimetric inequality for the fractional perimeter. Recently, Brasco, Cinti, and Vita \cite{Brasco2019a} proved stability of the fractional Faber--Krahn inequality using a similar argument. As a corollary, they also showed stability of the fractional Saint-Venant inequality.  

The fractional Laplacian of order $\frac{\alpha}{2}$ is given by
\begin{equation*}
    (-\Delta)^{\frac{\alpha}{2}}f(x)
    =\sfA_{n,\alpha}\int_{\R^{n}}\frac{f(x)-f(y)}{|x-y|^{n+\alpha}}\,dy
\end{equation*}
where 
\begin{equation}\label{eq:A_nalpha}
    \sfA_{n,\alpha}
    =\frac
    {2^{\alpha}\Gamma\big(\frac{n+\alpha}{2}\big)}
    {\pi^{\frac{n}{2}}|\Gamma\big(-\frac{\alpha}{2}\big)|}.
\end{equation}
The space $\wt{W}^{\alpha,p}_{0}(D)$ is  the closure of $C_{0}^{\infty}(D)$ with respect to the norm $u\mapsto [u]_{\alpha,p}+\|u\|_{L^{p}(D)}$ where 
\begin{equation*}
    [u]_{\alpha,p}
    = \left(\int_{\R^{n}}\int_{\R^{n}}\frac{|u(x)-u(y)|^{p}}{|x-y|^{n+\alpha p/2}}\,dxdy\right)^{\frac{1}{p}}.
\end{equation*}    
The fractional torsional rigidity of order $\alpha$ is defined by $T_\alpha(D)=\|u_D^\alpha\|_1$ where $u_D^\alpha=\E^x[\tau_D^\alpha]$ is the expected lifetime of $X_t^\alpha$ in $D$. In this context, $u_D^\alpha$ is called the fractional $\alpha$-torsion function. We have the following variational representations
\begin{equation}\label{eq:Talpha-varfor}
    T_{\alpha}(D)
    = \max_{u\in \wt{W}^{\alpha,2}_{0}(D)\setminus\{0\}} 
        \left(
            2\|u\|_{L^1(D)}-\frac{\sfA_{n,\alpha}}{2}[u]_{\alpha,2}^{2}
        \right)
    =\max_{u\in \wt{W}^{\alpha,2}_{0}(D)\setminus\{0\}}
        \frac{2}{\sfA_{n,\alpha}}
        [u]_{\alpha,2}^{-2}\|u\|_{L^1(D)}^{2}
\end{equation}
where $\sfA_{n,\alpha}$ is given by \eqref{eq:A_nalpha}.
In particular, since $u^{\alpha}_{D}\in \wt{W}^{\alpha,2}_{0}(D)$ we have
\begin{equation}\label{eq:Talphafor}
    T_\alpha (D) 
    = \|u^{\alpha}_{D}\|_{L^1(D)}
    =\frac{\sfA_{n,\alpha}}{2}[u^{\alpha}_{D}]_{\alpha,2}^{2}
    =\frac{\sfA_{n,\alpha}}{2}\int_{\R^{n}}\int_{\R^{n}}\frac{|u^{\alpha}_{D}(x)-u^{\alpha}_{D}(y)|^{2}}{|x-y|^{n+\alpha}}\,dxdy.
\end{equation}
Consider a solution of the equation
\begin{align*}
    \begin{cases}
        \dive(z^{1-\alpha}\nabla U)=0, & (x,z)\in\R^{n+1}_+,\\
        U(x, 0)=u_D^\alpha(x), & x\in \R^n.
    \end{cases}
\end{align*}
Then we have
\begin{equation*}
    [u_D^\alpha]_{\alpha,2}^2=\gamma_{n,\alpha}\iint_{\R^{n+1}_+}z^{1-\alpha}|\nabla U|^2\,dxdz
    =\iint_{\R^{n+1}_+}z^{1-\alpha}|\nabla_x U|^2\,dxdz
    +\iint_{\R^{n+1}_+}z^{1-\alpha}|\partial_z U|^2\,dxdz
\end{equation*}
for some constant $\gamma_{n,\alpha}$. Let $U^\ast(x,z)=(U(\cdot,z))^\ast(x)$ be the symmetric decreasing rearrangement of $U$ with respect to $x$, then it was shown in \cite[Lemma 2.6]{Fusco2011a} that
\begin{align}\label{eq:polyaszegoforU}
    \iint_{\R^{n+1}_+}z^{1-\alpha}|\nabla_x U|^2\,dxdz
    \geq \iint_{\R^{n+1}_+}z^{1-\alpha}|\nabla_x U^\ast|^2\,dxdz
\end{align}
and
\begin{align*}
    \iint_{\R^{n+1}_+}z^{1-\alpha}|\partial_z U|^2\,dxdz
    \geq \iint_{\R^{n+1}_+}z^{1-\alpha}|\partial_z U^\ast|^2\,dxdz.
\end{align*}
In \cite{Brasco2019a}, the authors improved \eqref{eq:polyaszegoforU} quantitatively as in the local case, which leads to a quantitative fractional Saint-Venant inequality. 

To generalize Theorem \ref{mainthm1} and Theorem \ref{mainthm2} to  the $\alpha$-stable processes, one might need to apply this extension and a symmetrization argument at the level of the function $U$, not the seminorm $[u_D^\alpha]_{\alpha,2}$ as in \cite{Brasco2019a}. Then it is required to show that a quantitative improvement can be transferred as $z$ tends to 0. For $\alpha=1$, this approach was also used in \cite{Banuelos2004a, Banuelos2006a, Banuelos2006b} to study spectral gap estimates and properties of nodal domains.  Because of its connection to the Cauchy process and the Steklov problem, this special case may be more tractable with such an approach.  

\subsection{A fractional P\'olya--Szeg\"o inequality}
We discuss stability of a fractional P\'olya--Szeg\"o inequality, which has a close relation to the fractional Saint-Venant inequality. The fractional $\alpha$--perimeter of $D$ is defined by
\begin{equation*}
    P_{\alpha}(D)
    =\int_{D}\int_{\R^{n}\setminus D}\frac{1}{|x-y|^{n+\alpha/2}}dxdy
    =\frac{1}{2}[\chi_{D}]_{\alpha,1},
\end{equation*}
where $\chi_{D}$ is the characteristic function of $D$. Note that $P_{\alpha}(D)\geq C_{n,\alpha}|D|^{\frac{2n-\alpha}{2n}}$ by the fractional Sobolev embedding. The quantitative isoperimetric inequality for fractional perimeter \cite{Fusco2011a} states that for $n\geq 1$ and $\alpha\in(0,2)$, there exists a constant $\sfB_{n,\alpha}$ such that for all Borel set $D\subset \R^{n}$ with $0<|D|<\infty$, 
\begin{equation}\label{eq:frac_quant_isop}
    P_{\alpha}(D)
    \geq P_{\alpha}(D^{\ast})(1+\sfB_{n,\alpha}A(D)^{\frac{2}{\alpha}}).
\end{equation}
By the layer cake representation, we obtain a fractional version of the coarea formula \cite[Lemma 4.7]{Brasco2014b}. Indeed, if $u\in L^{1}(\R^{n})$ is a nonnegative function vanishing at $\infty$, then 
\begin{equation}\label{eq:frac_coarea}
    [u]_{\alpha,1}
    =2\int_{0}^{\infty} P_{\alpha}(\{x:u(x)>t\})dt.
\end{equation}
We have a fractional version of the P\'olya--Szeg\"o inequality with a remainder term.
\begin{proposition}\label{prop:frac_quant_PolyaSzego}
    Let $\alpha\in(0,2)$ and $D$ be a bounded domain in $\R^{n}$ with $A(D)>0$. If $u\in \wt{W}^{\alpha,1}_{0}(D)$, then there exists $t_{\ast}>0$ such that
    \begin{equation*}
        [u]_{\alpha,1}
        \geq [u^{\ast}]_{\alpha,1}
            +C_{n,\alpha}A(D)^{\frac{2}{\alpha}}\max\{t_{\ast}|D|^{\frac{2n-\alpha}{2n}},\|u\wedge t_{\ast}\|_{\frac{2n-\alpha}{2n}}\}.
    \end{equation*} 
\end{proposition}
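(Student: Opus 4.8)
The plan is to mimic the coarea-formula proof of the (plain) fractional P\'olya--Szeg\"o inequality, inserting at the level of each superlevel set the quantitative fractional isoperimetric inequality \eqref{eq:frac_quant_isop} together with the transfer of asymmetry from Lemma \ref{lem:prop_asym}; this is the fractional analogue of the scheme used for the local Saint-Venant inequality in \cite{Brasco2016a}. Set $p^{\ast}=\frac{2n}{2n-\alpha}$, so that $\frac{1}{p^{\ast}}=\frac{2n-\alpha}{2n}$ and the norm $\|\cdot\|_{\frac{2n-\alpha}{2n}}$ in the statement is the $L^{p^{\ast}}$-norm. We may assume $u\geq 0$, and, since $u\in\wt{W}^{\alpha,1}_{0}(D)$, that $U_{t}:=\{u>t\}\subseteq D$ for every $t>0$ up to a null set. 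Put $\mu(t)=|U_{t}|$ and take $t_{\ast}=\sup\{t>0:\mu(t)>|D|(1-\tfrac14 A(D))\}$ as in Remark \ref{rmk:transferasymmetry}; the asserted inequality is trivial when $t_{\ast}=0$ (and $t_{\ast}>0$ whenever $\{u>0\}$ fills up $D$, e.g.\ for $u=u_{D}^{\alpha}$), so assume $t_{\ast}>0$. Since $\{u^{\ast}>t\}$ is precisely the centered ball $U_{t}^{\ast}$ with $|U_{t}^{\ast}|=\mu(t)$, applying the fractional coarea formula \eqref{eq:frac_coarea} to $u$ and to $u^{\ast}$ and subtracting gives
\begin{equation*}
    [u]_{\alpha,1}-[u^{\ast}]_{\alpha,1}
    =2\int_{0}^{\infty}\big(P_{\alpha}(U_{t})-P_{\alpha}(U_{t}^{\ast})\big)\,dt ,
\end{equation*}
the integrand being nonnegative by the plain fractional isoperimetric inequality $P_{\alpha}(U_{t})\geq P_{\alpha}(U_{t}^{\ast})$ (which in particular makes $[u^{\ast}]_{\alpha,1}$ finite).

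Next I would bound each slice from below. By \eqref{eq:frac_quant_isop}, $P_{\alpha}(U_{t})-P_{\alpha}(U_{t}^{\ast})\geq \sfB_{n,\alpha}\,P_{\alpha}(U_{t}^{\ast})\,A(U_{t})^{2/\alpha}$, and, $U_{t}^{\ast}$ being a ball, $P_{\alpha}(U_{t}^{\ast})=c_{n,\alpha}\,\mu(t)^{\frac{2n-\alpha}{2n}}$ where $c_{n,\alpha}$ is the sharp constant in the fractional isoperimetric inequality recalled before the proposition. For $t<t_{\ast}$ one has $|D\setminus U_{t}|/|D|=1-\mu(t)/|D|<\tfrac14 A(D)$, so Lemma \ref{lem:prop_asym} with $k=\tfrac14$ gives $A(U_{t})\geq\tfrac12 A(D)$, while $\mu(t)\geq\tfrac12|D|$ because $A(D)<2$. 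Dropping the part of the $t$-integral over $(t_{\ast},\infty)$ (licit since the integrand is nonnegative) and inserting these estimates yields
\begin{equation*}
    [u]_{\alpha,1}-[u^{\ast}]_{\alpha,1}
    \geq 2^{1-\frac{2}{\alpha}}\sfB_{n,\alpha}\,c_{n,\alpha}\,A(D)^{2/\alpha}\int_{0}^{t_{\ast}}\mu(t)^{\frac{2n-\alpha}{2n}}\,dt .
\end{equation*}

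It then remains to bound $\int_{0}^{t_{\ast}}\mu(t)^{\frac{2n-\alpha}{2n}}\,dt$ from below by a multiple of $\max\{t_{\ast}|D|^{\frac{2n-\alpha}{2n}},\,\|u\wedge t_{\ast}\|_{L^{p^{\ast}}}\}$. The bound by $t_{\ast}|D|^{\frac{2n-\alpha}{2n}}$ (up to the factor $2^{-\frac{2n-\alpha}{2n}}$) is immediate from $\mu(t)\geq\tfrac12|D|$ on $(0,t_{\ast})$. For the other bound, write $u\wedge t_{\ast}=\int_{0}^{t_{\ast}}\chi_{U_{t}}\,dt$ and apply Minkowski's integral inequality in $L^{p^{\ast}}(\R^{n})$, which is legitimate since $p^{\ast}\geq 1$:
\begin{equation*}
    \|u\wedge t_{\ast}\|_{L^{p^{\ast}}}
    \leq\int_{0}^{t_{\ast}}\|\chi_{U_{t}}\|_{L^{p^{\ast}}}\,dt
    =\int_{0}^{t_{\ast}}\mu(t)^{\frac{2n-\alpha}{2n}}\,dt .
\end{equation*}
Combining the last three displays gives the proposition, with $C_{n,\alpha}$ a computable multiple of $\sfB_{n,\alpha}\,c_{n,\alpha}$.

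I do not expect a conceptual obstacle: the proof is an assembly of \eqref{eq:frac_coarea}, \eqref{eq:frac_quant_isop}, and the transfer of asymmetry. The points that need care are bookkeeping ones --- checking $U_{t}\subseteq D$ so that Lemma \ref{lem:prop_asym} applies, recording that $t_{\ast}>0$ in the relevant situations, and computing $P_{\alpha}$ of a ball correctly through the scaling --- together with the one step that is \emph{not} a verbatim transcription of the local argument, namely the use of Minkowski's integral inequality to extract the term $\|u\wedge t_{\ast}\|_{L^{p^{\ast}}}$. That extra term is precisely what will later allow $t_{\ast}$ to be traded for $A(D)$ in the applications to the fractional Saint-Venant inequality.
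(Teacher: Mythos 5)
Your proof is correct and follows essentially the same route as the paper: coarea formula \eqref{eq:frac_coarea}, quantitative fractional isoperimetric inequality \eqref{eq:frac_quant_isop} applied slicewise, transfer of asymmetry via Lemma \ref{lem:prop_asym}, restriction of the $t$-integral to $(0,t_\ast)$, and then the two lower bounds on $\int_0^{t_\ast}\mu(t)^{(2n-\alpha)/(2n)}\,dt$ combined in a max. The one place you diverge is the final step extracting $\|u\wedge t_\ast\|_{p^\ast}$: you write $u\wedge t_\ast=\int_0^{t_\ast}\chi_{U_t}\,dt$ and invoke Minkowski's integral inequality, whereas the paper applies the inequality $\left(\int_0^\infty f\right)^r\geq \int_0^\infty r f^r t^{r-1}\,dt$ for nonnegative non-increasing $f$ from Maz'ya's book (with $f=\mu^{1/r}$, $r=\frac{2n}{2n-\alpha}$) together with the layer-cake identity $\int_0^{t_\ast}rt^{r-1}\mu(t)\,dt=\|u\wedge t_\ast\|_r^r$. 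The two routes give precisely the same estimate; Minkowski is arguably the more standard tool and bypasses the layer-cake identification. You also correctly read the subscript on the norm in the statement as $p^\ast=\frac{2n}{2n-\alpha}$ (the paper's own proof ends with $\|u\wedge t_\ast\|_{\frac{2n}{2n-\alpha}}$, so the subscript $\frac{2n-\alpha}{2n}$ in the proposition as displayed is a typo for its reciprocal).
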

\begin{proof}
Let $D_{t}=\{x:u(x)>t\}$ and $\mu(t)=|D_{t}|$. Using the coarea formula \eqref{eq:frac_coarea} and the quantitative isoperimetric inequality for fractional perimeter \eqref{eq:frac_quant_isop}, we have
\begin{align*}
    [u]_{\alpha,1}
    &= 2\int_{0}^{\infty} P_{\alpha}(D_t)dt \\
    &\geq  2\int_{0}^{\infty} P_{\alpha}(D_t^{\ast})dt
        +2\sfB_{n,\alpha}\int_{0}^{\infty}P_{\alpha}(D_t^{\ast})A(D_{t})^{\frac{2}{\alpha}} dt\\
    &\geq  [u^{\ast}]_{\alpha,1}
        +C_{n,\alpha}\int_{0}^{\infty}\mu(t)^{\frac{2n-\alpha}{2n}}A(D_{t})^{\frac{2}{\alpha}} dt
\end{align*}
for some constant $C_{n,\alpha}$. Let $t_{\ast}=\sup\{t>0: \mu(t)\geq |D|(1-\frac{1}{4}A(D))\}$. By Lemma \ref{lem:prop_asym} and \eqref{eq:levelset_est}, we have $A(D_{t})\geq \frac{1}{2}A(D)$ for $t<t_{\ast}$ and
\begin{equation*}
    [u]_{\alpha,1}
    \geq  [u^{\ast}]_{\alpha,1}
        +C_{n,\alpha}t_{\ast}|D|^{\frac{2n-\alpha}{2n}}A(D)^{\frac{2}{\alpha}}.
\end{equation*}
Using the inequality 
\begin{equation*}
    \left(\int_{0}^{\infty}f(x)dx\right)^{r}
    \geq \int_{0}^{\infty}rf(x)^{r}x^{r-1}dx
\end{equation*} 
for $r\geq 1$ and a nonnegative, non-increasing function $f$  on $(0,\infty)$ (see \cite[p.49]{Mazya2011a}), we get
\begin{equation*}
    \int_{0}^{t_{\ast}}\mu(t)^{\frac{1}{r}}dt
    \geq \left(\int_{0}^{t_{\ast}}rt^{r-1}\mu(t)dt\right)^{\frac{1}{r}}
    =\|u\wedge t_{\ast}\|_{r}
\end{equation*}
where $r=\frac{2n}{2n-\alpha}>1$, which implies
\begin{equation*}
    [u]_{\alpha,1}
    \geq  [u^{\ast}]_{\alpha,1}
        +C_{n,\alpha}\|u\wedge t_{\ast}\|_{\frac{2n}{2n-\alpha}}A(D)^{\frac{2}{\alpha}}.
\end{equation*}
\end{proof}

A natural question is a quantitative improvement of the inequality $[u]_{\alpha,2}\geq  [u^{\ast}]_{\alpha,2}$ in terms of $A(D)$. This open problem is interesting because it yields a quantitative Saint-Venant inequality. Suppose that we have $[u]_{\alpha,2}\geq  [u^{\ast}]_{\alpha,2}+\Phi(t_\ast, A(D))$ for some function $\Phi$. By \eqref{eq:Talpha-varfor}, \eqref{eq:Talphafor}, and the rearrangement inequality \cite{Frank2008b}, we get
\begin{align*}
    T_{\alpha}(D)
    &\leq \frac{2}{\sfA_{n,\alpha}}\frac{\|u^{\ast}\|_{1}^{2}}{[u^{\ast}]_{\alpha,2}^{2}+\Phi(t_{\ast},A(D))}\\
    &\leq T_{\alpha}(B)
        \left(1+\frac{\Phi(t_{\ast},A(D))}{[u^{\ast}]_{\alpha,2}^{2}}\right)^{-1}
\end{align*}
where $u=u_{D}^{\alpha}$ is the $\alpha$--torsion function and $B$ is a ball with $|D|=|B|$. Using the fact that $[u^{\ast}]_{\alpha,2}^{2}\leq [u]_{\alpha,2}^{2}$, we get
\begin{equation*}
    T_{\alpha}(B)-T_{\alpha}(D)
    \geq \Phi(t_{\ast},A(D)).
\end{equation*}
Under mild assumption on $\Phi$, $t_\ast$ can be removed as in Theorem \ref{mainthm2}. 

\subsection*{Acknowledgement} 
I would like to thank my academic advisor, Prof. Rodrigo Ba\~nuelos, for suggesting the problems investigated in this paper and his invaluable help and encouragement while writing the paper.

\providecommand{\bysame}{\leavevmode\hbox to3em{\hrulefill}\thinspace}
\providecommand{\MR}{\relax\ifhmode\unskip\space\fi MR }
\providecommand{\MRhref}[2]{%
  \href{http://www.ams.org/mathscinet-getitem?mr=#1}{#2}
}
\providecommand{\href}[2]{#2}

\end{document}